\newtheorem{theorem}{Theorem}[section]
\numberwithin{equation}{section}
\newcommand{\abs}[1]{\left\lvert#1\right\rvert}
\newcommand{\rd}[1]{\left\lfloor#1\right\rceil}
\newcommand{\ceiling}[1]{\left\lceil#1\right\rceil}
\newcommand{\gfileextn}{pdf}
\begin{document}

\title{A Tale of Two Omegas}

\author[M.~J. Mossinghoff]{Michael J. Mossinghoff}\thanks{This work was supported in part by a grant from the Simons Foundation (\#426694 to M.~J. Mossinghoff).}
\address{Department of Mathematics and Computer Science\\
Davidson College\\
Davidson, NC, 28035-6996, USA}
\curraddr{Center for Communications Research, 805 Bunn Dr., Princeton, NJ 08540 USA}
\email{m.mossinghoff@idaccr.org}

\author[T.~S. Trudgian]{Timothy S. Trudgian}\thanks{This work was supported by a Future Fellowship (FT160100094 to T.~S. Trudgian) from the Australian Research Council.}
\address{School of Science\\
UNSW Canberra at ADFA\\
ACT 2610, Australia}
\email{t.trudgian@adfa.edu.au}

\keywords{Prime-counting functions, Liouville function, oscillations, Riemann hypothesis}
\subjclass[2010]{Primary: 11M26, 11N64; Secondary: 11Y35, 11Y70}
\date{\today}

\begin{abstract}
\noindent
We consider $\omega(n)$ and $\Omega(n)$, which respectively count the number of distinct and total prime factors of $n$.
We survey a number of similarities and differences between these two functions, and study the summatory functions $L(x)=\sum_{n\leq x} (-1)^{\Omega(n)}$ and $H(x)=\sum_{n\leq x} (-1)^{\omega(n)}$ in particular.
Questions about oscillations in both of these functions are connected to the Riemann hypothesis and other questions concerning the Riemann zeta function.
We show that even though $\omega(n)$ and $\Omega(n)$ have the same parity approximately 73.5\% of the time, these summatory functions exhibit quite different behaviors: $L(x)$ is biased toward negative values, while $H(x)$ is unbiased.
We also prove that $H(x)>1.7\sqrt{x}$ for infinitely many integers $x$, and $H(x)<-1.7\sqrt{x}$ infinitely often as well.
These statements complement results on oscillations for $L(x)$.
\end{abstract}

\maketitle

\section{Introduction}\label{Intro}
\noindent
The two prime-counting functions $\omega(n)$ and $\Omega(n)$ arise frequently and naturally in many problems in number theory: recall that $\omega(n)$ designates the number of distinct prime factors of $n$, while $\Omega(n)$ denotes the total number of primes dividing $n$, counting multiplicity.
Thus, if $n=\prod_{i=1}^r p_i^{e_i}$ with $p_1$, \ldots, $p_r$ distinct primes and each $e_i$ positive, then $\omega(n)=r$ and $\Omega(n) = e_1+\cdots+e_r$.
Results concerning these closely related functions often occur in pairs in the literature and very often a result for one function closely resembles the corresponding result for the other.
However, significant differences arise as well: in this paper we investigate one area where the behavior attached to $\omega(n)$ is strikingly different from that of $\Omega(n)$.

For this, let $\lambda(n)$ denote the Liouville function, defined by $\lambda(n) = (-1)^{\Omega(n)}$, and let $\xi(n)$ denote the corresponding function for $\omega(n)$: $\xi(n) = (-1)^{\omega(n)}$.
Thus, $\lambda(n)$ indicates whether $n$ has an even or odd number of prime factors counting multiplicity, and $\xi(n)$ provides a similar indicator for the number of distinct prime factors of $n$.
Then define $L(x)$ and $H(x)$ as sums of these respective functions over the integers in $[1, x]$:
\[
L(x) = \sum_{n=1}^{\lfloor x\rfloor} \lambda(n), \quad H(x) = \sum_{n=1}^{\lfloor x\rfloor} \xi(n).
\]
P\'olya studied values of $L(x)$ --- which here we will call P\'{o}lya's function --- in connection with a question on quadratic forms.
In 1919 \cite{Polya}, he showed that if $p$ is a prime with $p\equiv3$ mod 4, $p>7$, and the number of classes of positive definite quadratic forms with discriminant $-p$ is $1$, then $L((p-3)/4) = 0$.
This therefore holds for $p=11$, $19$, $43$, $67$, and $163$ (the qualifying primes where $\mathbb{Q}(\sqrt{-p})$ has class number $1$).
In this article, P\'olya reported computing $L(n)$ for $n$ up to about 1500, and he noted that this function was never positive over this range, after $L(1)=1$.
He remarked that this merited further study, since if it could be shown that $L(x)$ never changed sign for sufficiently large $x$, then the Riemann hypothesis would follow, as well as the statement that all of the zeros of the Riemann zeta function are simple.
The statement that $L(x)<0$ for all $x>1$, or more conservatively for all sufficiently large $x$, is sometimes referred to as \textit{P\'olya's conjecture} in the literature, but for historical accuracy we remark that P\'olya did not in fact conjecture this inequality, at least not in print, and so we prefer the term \textit{P\'olya's question} for this problem.

P\'olya's question is reminiscent of an older problem regarding the sum of the values of another common arithmetic function in number theory, the M\"obius function $\mu(n)$, a close cousin of the Liouville function.
(Recall that $\mu(n)=\lambda(n)$ if $\Omega(n)=\omega(n)$, and 0 otherwise.)
Define the \textit{Mertens function} by
\[
M(x) = \sum_{n=1}^{\lfloor x\rfloor} \mu(n).
\]
In 1897 Mertens \cite{Mertens} tabulated values of this function for $n\leq10^4$, and on the basis of those results he opined that it was ``very probable'' that $M(x) < \sqrt{x}$ for all $x>1$.
This is widely known as the \textit{Mertens conjecture}, and this problem remained open until 1985, when it was disproved by Odlyzko and te Riele \cite{OTR}.
It is interesting to note that the behavior of the Mertens function is qualitatively quite different from that of P\'olya's function: the latter is skewed toward negative values, while the former oscillates more evenly around $0$. 
Figure~\ref{figMLH} displays plots of both of these functions, normalized by dividing by $\sqrt{x}$, over the interval $[e^9, e^{14}]$.
Here we replace $x$ with $e^u$ and plot in $u$ in order to display the oscillations  more effectively.
We note that this qualitative difference appears despite the fact that the M\"obius and Liouville functions agree on a substantial subset of the positive integers. Indeed, $\lambda(n)=\mu(n)$ precisely when $n$ is squarefree, and it is well known that the set of squarefree numbers has density $6/\pi^2=0.6079\ldots$ in the set of positive integers.

It is well known that the Riemann hypothesis and the simplicity of the zeros of the zeta function would both follow if it could be established that either of the functions $L(x)/\sqrt{x}$ or $M(x)/\sqrt{x}$ were bounded, either above or below (see for instance \cite{MosTru}).
However, in 1942 Ingham \cite{Ingham1942} showed that bounding either of these functions, in either direction, implies significantly more: it would also follow that there exist infinitely many integer linear relations among the ordinates of the zeros of the Riemann zeta function on the critical line in the upper half-plane.
Stated another way, if one assumes the Riemann hypothesis and the simplicity of the zeros of the zeta function, and if the ordinates of its zeros on the critical line in the upper half plane are linearly independent over $\mathbb{Z}$, then both $L(x)/\sqrt{x}$ and $M(x)/\sqrt{x}$ must have unbounded\footnote{We shall return to these oscillations in Section~\ref{secOpen}.} limit supremum and limit infimum.
Since there seems to be no reason to suspect that such linear relations exist, and certainly none have been detected computationally, it seems plausible that the oscillations in these functions are in fact unbounded.

Beginning in the late 1960s, Bateman et al.\ \cite{Bateman}, Grosswald \cite{Grosswald67, Grosswald72}, Diamond \cite{Diamond}
and others developed a method for obtaining information on the oscillations in functions like $M(x)$ and $L(x)$.
The main idea is that if a subset of the nontrivial zeros of the zeta function could be shown to have imaginary parts that are \textit{weakly} independent, then it would follow that the functions in question must have \textit{large} oscillations.
This method is described in Section~\ref{secIndependence}: roughly speaking, we define a set to be $N$-independent over $\mathbb{Z}$ if no nontrivial integer relations exist when the coefficients are bounded by $N$ in absolute value.
An effective version of this method was employed by Best and the second author \cite{Best} in 2015 to establish that
\begin{equation}\label{eqnOscMerBT}
\liminf_{x\to\infty} \frac{M(x)}{\sqrt{x}} < -1.6383, \quad
\limsup_{x\to\infty} \frac{M(x)}{\sqrt{x}} > 1.6383.
\end{equation}
(These values have since improved by Hurst \cite{Hurst} in 2016, who employed a method based in part on the work of Odlyzko and te Riele to obtain $-1.8376$ in the first inequality and $1.8260$ in the second.)
In 2018, the authors \cite{MT2017} employed this weak independence method to obtain new results on oscillations for $L(x)$:
\begin{equation}\label{eqnOscPolMT}
\liminf_{x\to\infty} \frac{L(x)}{\sqrt{x}} < -2.3723, \quad
\limsup_{x\to\infty} \frac{L(x)}{\sqrt{x}} > 1.0028.
\end{equation}

The function $H(x)$ and the associated Dirichlet series $h(s) = \sum_{n\geq 1} \xi(n) n^{-s}$, were investigated by van de Lune and Dressler \cite{Dressler} in 1975.
They proved that $H(x) = o(x)$ and that $h(1) = 0$. This is what one expects on analogy with sums of $\lambda(n)$. Indeed, bounds on $L(x)$ and $H(x)$ can be derived, via Dirichlet convolution identities, from bounds on $M(x)$, which themselves follow directly from the Prime Number Theorem. For example, we have $\xi = \mu \ast \eta$, where $\eta(n)$ is the multiplicative function defined on prime powers as $\eta(p^{k}) = 1- k$. Thus Schwarz \cite{Schwarz} noted that $H(x) = O(x \exp(-c\sqrt{\log x}))$;
this may be improved further by using the Vinogradov--Korobov zero-free region of the Riemann zeta function to $H(x) = O(x \exp(-c'(\log x)^{3/5} (\log\log x)^{-1/5}))$.

In this article, we investigate the oscillations in $H(x)$, similar to the studies on $M(x)$ and $L(x)$.
The behavior of $H(x)/\sqrt{x}$ is linked to the Riemann hypothesis and the linear independence question in the same way, but the qualitative behavior of this function is different.
Figure~\ref{figMLH} also displays the normalized values $H(x)/\sqrt{x}$ over $[e^9, e^{14}]$.
For a wider view, Figure~\ref{figBigH} exhibits $H(x)/\sqrt{x}$ over $x\leq 1.5\cdot10^{15}$.
(The plots for the other two functions continue roughly in the same horizontal bands evident in Figure~\ref{figMLH} and so are not displayed on this wider scale.)
This striking qualitative difference appears despite the fact that $\lambda(n)$ and $\xi(n)$ agree on a set of significantly higher density than that of the set of squarefree numbers: we prove the following theorem in Section~\ref{secParity}.

\begin{theorem}\label{thmAgree}
Let $\beta(x)$ denote the number of positive integers $n\leq x$ for which $\omega(n) \equiv \Omega(n) \pmod{2}$. Then $\beta = \lim_{x\rightarrow\infty} \beta(x)/x$ exists and $\beta = 0.73584\ldots.$
\end{theorem}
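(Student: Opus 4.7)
The plan is to recast the counting problem as the mean value of a multiplicative function, evaluate that mean via a squarefull--squarefree decomposition, and then compute the resulting rapidly-convergent Euler product.

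The starting observation is that $\omega(n) \equiv \Omega(n) \pmod 2$ if and only if $f(n) := \lambda(n)\xi(n) = +1$, so
\[
\beta(x) = \tfrac{1}{2}\bigl(\lfloor x\rfloor + F(x)\bigr), \qquad F(x) := \sum_{n\le x} f(n).
\]
It therefore suffices to show $F(x)/x$ converges to some limit $C$ and to identify $C$; then $\beta = (1+C)/2$. Now $f=\lambda\xi$ is multiplicative with $f(p^k) = (-1)^{k-1}$, so crucially $f(n)=1$ whenever $n$ is squarefree.

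Next I would factor each $n$ uniquely as $n = cd$ with $c$ squarefree, $d$ squarefull (every prime in $d$ occurring to exponent $\ge 2$), and $\gcd(c,d)=1$. Since $f(c)=1$, we have $f(n)=f(d)$, and
\[
F(x) = \sum_{\substack{d\le x\\ d\text{ squarefull}}} f(d)\,Q(x/d;d),
\]
where $Q(y;d):=\#\{c\le y: c\text{ squarefree},\,\gcd(c,d)=1\}$. A short M\"obius inversion gives
\[
Q(y;d)=\frac{6y}{\pi^{2}}\prod_{p\mid d}\frac{p}{p+1}+O\bigl(\sqrt{y}\,2^{\omega(d)}\bigr).
\]
The estimates $\sum_{d\text{ sqfull},\,d>x} 1/d\ll x^{-1/2}$ and $\sum_{d\text{ sqfull},\,d\le x} 2^{\omega(d)}/\sqrt{d}\ll\log x$ (both immediate from the parametrization $d=a^{2}b^{3}$ with $b$ squarefree) control the truncation and error terms, yielding
\[
F(x)=\frac{6x}{\pi^{2}}\sum_{d\text{ sqfull}}\frac{f(d)}{d}\prod_{p\mid d}\frac{p}{p+1}+O\bigl(\sqrt{x}\log x\bigr).
\]
The sum factors as an Euler product whose $p$-th local factor is $1+\tfrac{p}{p+1}\sum_{k\ge 2}(-1)^{k-1}p^{-k}=1-1/(p+1)^{2}$ after summing the geometric series, giving
\[
C=\frac{6}{\pi^{2}}\prod_{p}\Bigl(1-\frac{1}{(p+1)^{2}}\Bigr).
\]

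To conclude I would evaluate this product numerically. The local factors are $1+O(p^{-2})$, so convergence is fast; the tail past a cutoff $P$ is bounded by $\sum_{p>P}(p+1)^{-2}\ll \sum_{n>P}n^{-2}\ll 1/P$, so taking $P$ large enough produces $C = 0.47168\ldots$ and hence $\beta=(1+C)/2=0.73584\ldots$. The main technical point is obtaining the error in $Q(y;d)$ uniformly enough in $d$ that the squarefull outer sum can be extended to infinity with a controlled remainder; beyond that, the proof is routine multiplicative-function bookkeeping plus a concrete numerical computation. The one conceptual input that makes everything work is that $\lambda\xi$ equals $1$ on squarefrees, which collapses $F(x)$ to a manageable sum over squarefull numbers.
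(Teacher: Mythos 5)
Your proof is correct, and it reaches the answer by a genuinely different (and in some ways stronger) route than the one in the paper. Both arguments rest on the same decomposition $n=cd$ with $d$ the powerful (squarefull) part and $c$ squarefree and coprime to $d$, but the executions diverge from there. The paper's proof is deliberately elementary: it fixes a finite set of primes $P$, computes the density of integers whose powerful part is supported exactly on $P$ and has matching (resp.\ mismatching) parity of $\Omega-\omega$, and sandwiches $\beta$ between lower and upper estimates obtained by summing over many such sets $P$, showing the gap tends to $0$; the numerical value then comes from enumerating prime sets with $p_1\cdots p_r\le 10^7$. You instead convert the problem to the mean value of the multiplicative function $\lambda\xi$, collapse the squarefree part using $f(c)=1$, and obtain the closed-form constant
\[
C=\frac{6}{\pi^{2}}\prod_{p}\Bigl(1-\frac{1}{(p+1)^{2}}\Bigr),
\]
which is exactly the value $R(-1)$ of R\'enyi's generating function \eqref{eqnRenyi} that the paper cites ($\beta=(1+R(-1))/2$) but does not rederive. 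Your route buys two things the paper's self-contained argument does not: a quantitative error term $F(x)=Cx+O(x^{1/2+\varepsilon})$ rather than bare existence of the density, and a single rapidly convergent Euler product that is far better suited to high-precision evaluation than the paper's sum over prime sets (indeed it is essentially the formula used by Detrey--Spaenlehauer--Zimmermann to get $1000$ digits). Two small points to tidy up: the bound $\sum_{d\le x,\,d\ \mathrm{sqfull}}2^{\omega(d)}/\sqrt{d}\ll\log x$ should be $(\log x)^{2}$ (the relevant Dirichlet series behaves like $\zeta(2s)^{2}$ near $s=1/2$), which changes nothing downstream; and when evaluating the product numerically you should note that the tail beyond $P$ is $O(1/(P\log P))$ over primes, so a straightforward cutoff needs $P$ on the order of $10^{5}$--$10^{6}$ to certify the digits $0.73584$, unless you accelerate convergence by factoring out zeta values.
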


\begin{figure}[tb]
\centering
\includegraphics[width=4in]{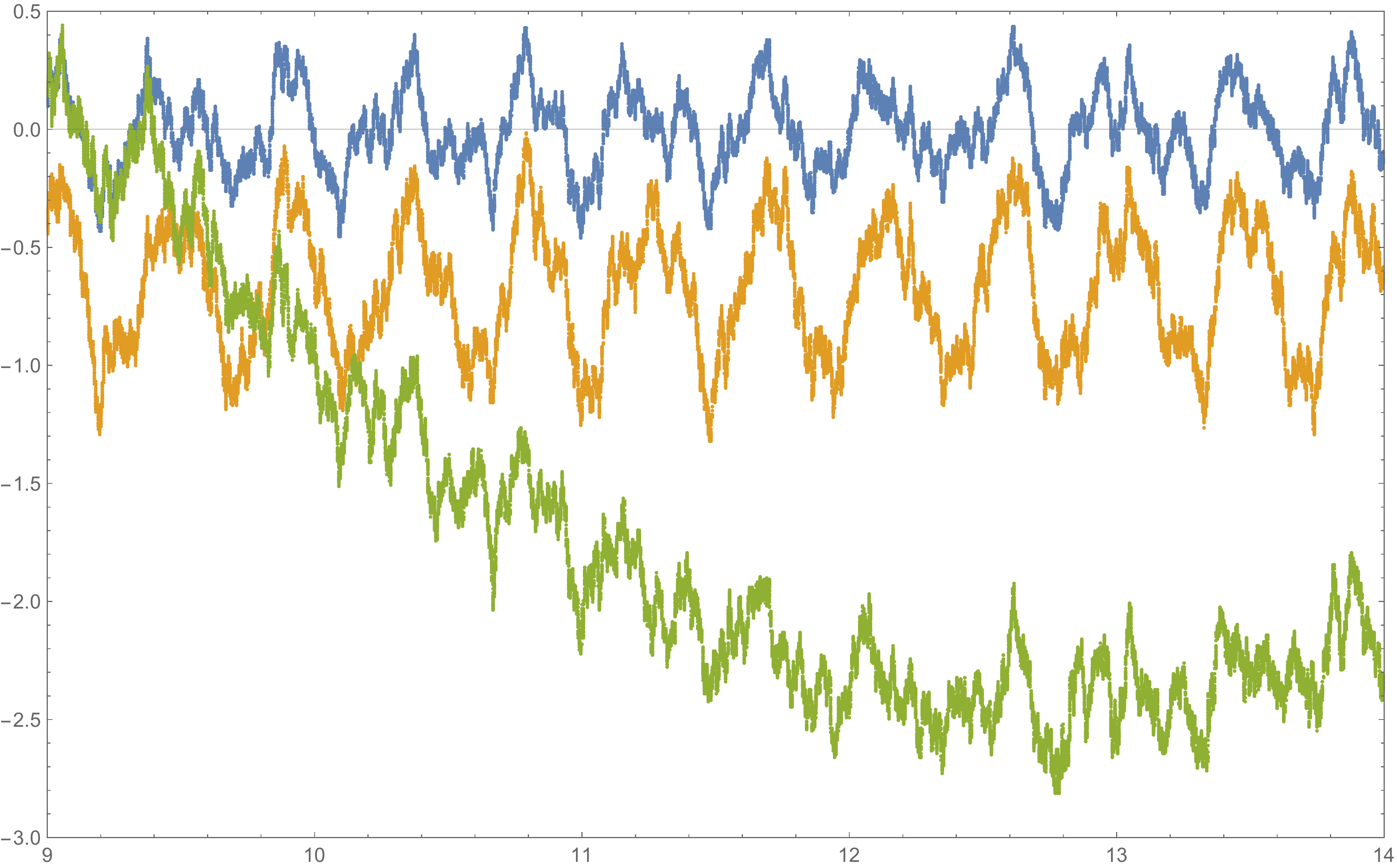}
\caption{$e^{-u/2}M(e^u)$ (top, oscillating about $y=0$), $e^{-u/2}L(e^u)$ (middle, oscillating about $y=1/\zeta(1/2)=-0.6847\ldots$), and $e^{-u/2}H(e^u)$ (bottom), for $9\leq u\leq 14$.}\label{figMLH}
\end{figure}

\begin{figure}[tb]
\centering
\includegraphics[width=4in]{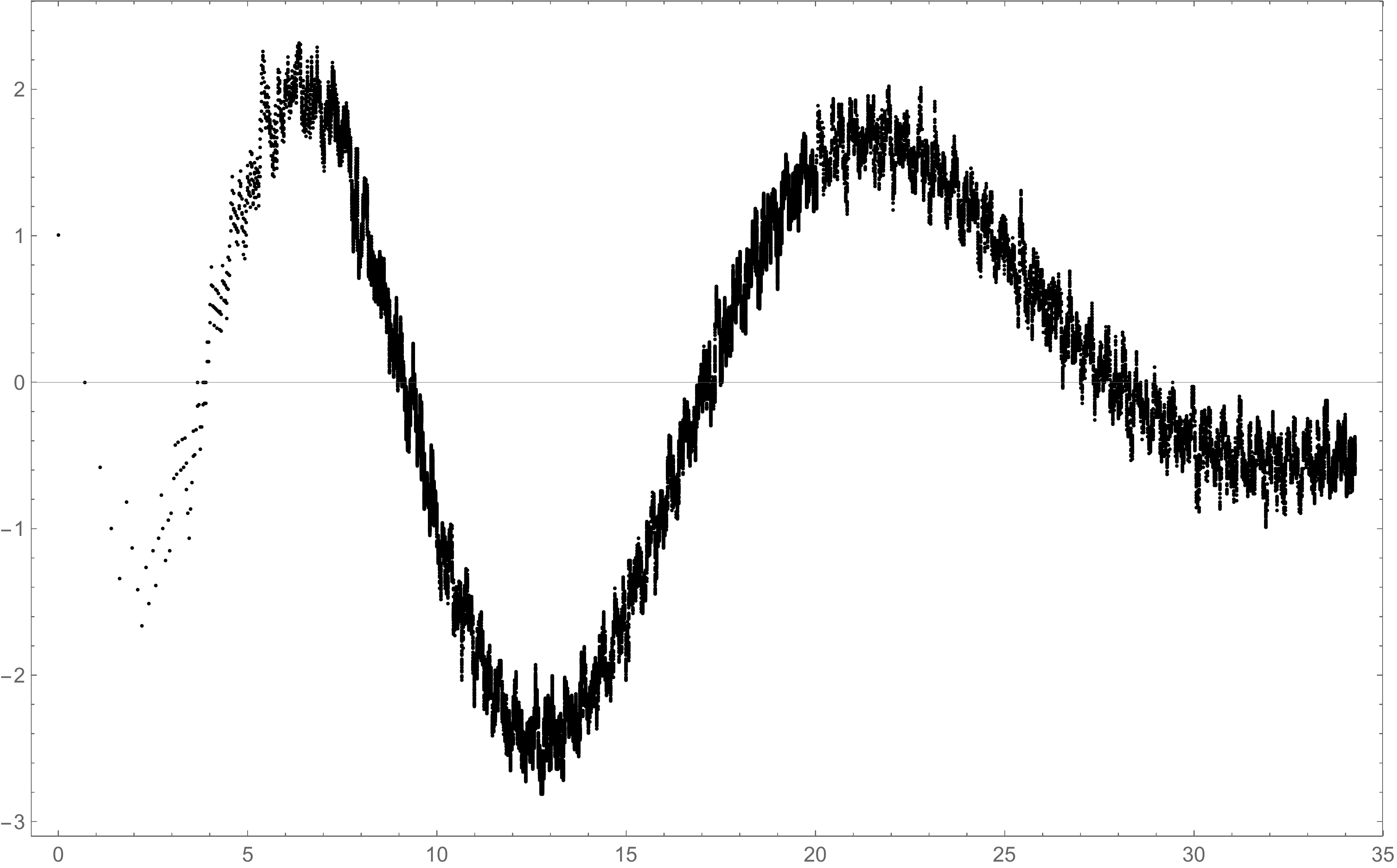}
\caption{$e^{-u/2}H(e^u)$ for $0\leq u\leq \log(1.5\cdot10^{15})$.}\label{figBigH}
\end{figure}

We also establish lower bounds on the oscillations in $H(x)$ by establishing weak independence for the ordinates of certain zeros of the Riemann zeta function, following our work in \cite{MT2017}.
We prove the following theorem in Section~\ref{secCalculations}.

\begin{theorem}\label{thmOscillate}
We have
\begin{equation*}\label{humble}
\liminf_{x\to\infty} \frac{H(x)}{\sqrt{x}} < -1.700144 \quad\textrm{and}\quad
\limsup_{x\to\infty} \frac{H(x)}{\sqrt{x}} > 1.700144.
\end{equation*}
\end{theorem}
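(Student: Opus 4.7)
The approach adapts the Ingham--Bateman--Grosswald--Diamond oscillation framework, as implemented by the authors for $L(x)$ in \cite{MT2017}, to the Dirichlet series of $\xi$. The crucial algebraic input is the factorization
\[
h(s)=\sum_{n\ge 1}\frac{\xi(n)}{n^s}=\prod_p\frac{1-2p^{-s}}{1-p^{-s}}=\frac{G(s)}{\zeta(s)},\qquad G(s):=\prod_p\frac{1-2p^{-s}}{(1-p^{-s})^2}.
\]
Rewriting the Euler factors of $G$ as $1-p^{-2s}/(1-p^{-s})^2$ shows that $G(s)$ is given by an absolutely convergent Euler product on $\Re(s)>1/2$, and is therefore analytic there. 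Under the Riemann hypothesis and the assumption of simple zeros, $h(s)$ is accordingly meromorphic on $\Re(s)>1/2$ with simple poles precisely at the nontrivial zeros $\rho=1/2+i\gamma$, of residue $G(\rho)/\zeta'(\rho)$. In contrast to the Liouville series $\zeta(2s)/\zeta(s)$, no pole of $h$ at $s=1/2$ arises; this is the analytic reason for the absence of bias in $H(x)/\sqrt{x}$ observed in Figures~\ref{figMLH} and~\ref{figBigH}, and it is what makes the bound in Theorem~\ref{thmOscillate} symmetric.

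A truncated Perron argument, exactly as in \cite{MT2017}, then reduces the problem to the study of the almost-periodic trigonometric sum
\[
\Phi(u):=\frac{H(e^u)}{e^{u/2}}=2\sum_{\gamma>0}\Re\bigl(c_{\gamma}e^{i\gamma u}\bigr)+o(1),\qquad c_\gamma:=\frac{G(\rho)}{\rho\,\zeta'(\rho)},
\]
where $\rho=1/2+i\gamma$ runs over nontrivial zeros in the upper half-plane. To control the tail $\sum_{\gamma>T}|c_\gamma|$, I would note that $|G(1/2+i\gamma)|=O(\log|\gamma|)$, which can be read off from the expansion $\log G(s)=-P(2s)+\cdots$ (where $P$ is the prime zeta function and the omitted terms converge absolutely on $\Re(s)>1/3$) together with the bound $P(1+it)=\log\zeta(1+it)+O(1)$; combined with standard lower bounds for $|\zeta'(\rho)|$, this yields an acceptable tail.

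Following the template of \cite{MT2017}, I would then fix a large integer $N$, consider only the first $N$ positive ordinates $\gamma_1<\cdots<\gamma_N$, and use LLL reduction to certify that $\{\gamma_1,\ldots,\gamma_N\}$ is $L$-independent over $\mathbb{Z}$ (no relation $\sum k_n\gamma_n=0$ with $\max|k_n|\le L$) for a suitably large integer $L$. The Diamond-type almost-periodicity lemma used there then yields
\[
\limsup_u\Phi(u)\;\ge\;2\sum_{n=1}^N|c_{\gamma_n}|-\eta(N,L)\sum_{n=1}^N|c_{\gamma_n}|-2\sum_{\gamma>\gamma_N}|c_\gamma|,
\]
together with a matching bound in magnitude for $\liminf_u\Phi(u)$. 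The main obstacle is the arithmetic step: evaluating $G(\rho_n)$ and $\zeta'(\rho_n)$ to certified high precision (the former via a rigorously truncated Euler product), producing sufficiently many zeros of $\zeta$, and running the LLL reduction to certify an $L$ large enough that the almost-periodicity error $\eta(N,L)$ is negligible, all with the goal of making the displayed lower bound comfortably exceed $1.700144$. The analytic scaffolding is essentially identical to the Liouville-function case; the novelty is concentrated in the coefficients $c_\gamma$ built from $G(\rho)$ rather than $\zeta(2\rho)$, and in the numerical work required to evaluate them.
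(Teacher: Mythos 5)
Your overall architecture matches the paper's: the residues $G(\rho)/(\rho\zeta'(\rho))$ are exactly the paper's $r_{\gamma_n}$ in \eqref{carson} (your $G$ is the paper's $F_1$), and the plan of certifying weak independence of zero ordinates by LLL and feeding the residues into an oscillation theorem is the intended route. But the key inequality you invoke is wrong in shape, and the error is fatal rather than cosmetic. You propose to bound $\limsup_u\Phi(u)$ from below by $2\sum_{n\le N}|c_{\gamma_n}|$ minus an almost-periodicity error \emph{minus the tail} $2\sum_{\gamma>\gamma_N}|c_\gamma|$, and you claim the tail is "acceptable" via lower bounds on $|\zeta'(\rho)|$. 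In fact $\sum_{\gamma}|c_\gamma|$ diverges --- the paper explicitly verifies this (following Ingham, using only $|\zeta(s)|^{-1}=O(\log t)$ for $\sigma\ge1$), because divergence is a \emph{hypothesis} of Ingham's theorem, not an obstacle to be bounded away. No lower bound on $|\zeta'(\rho)|$ can rescue a divergent series, so your displayed lower bound is $-\infty$. The correct tool (Ingham's construction as made explicit by Anderson and Stark, Theorem~\ref{thmAndSta}) avoids any tail entirely by convolving with a kernel $k_T$ supported on $[-T,T]$ --- here the Jurkat--Peyerimhoff kernel \eqref{eqnJPkernel} --- so that the bound is the \emph{finite} sum $r_0+2\sum_{\gamma\in\Gamma'}\frac{N_\gamma}{N_\gamma+1}k_T(\gamma)|r_\gamma|$ with no subtracted remainder; the price is the damping factors $k_T(\gamma)<1$, which is why the paper optimizes over which $n$ zeros among the first $m$ to include rather than simply taking the first $N$ ordinates.

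Two further gaps. First, your independence certificate only excludes homogeneous relations $\sum k_n\gamma_n=0$. The Anderson--Stark theorem requires $\{N_\gamma\}$-independence \emph{in} $\Gamma\cap[0,T]$, which additionally demands that no ordinate $\gamma^*\le T$ outside your chosen set $\Gamma'$ satisfies $\sum c_\gamma\gamma=\gamma^*$ with small coefficients; without this the excluded frequencies in $[0,T]$ can conspire against the extremal alignment. Computationally this is most of the work: the paper reduces $m-n+1=2127$ lattices, not one. Second, $G$ being given by an absolutely convergent Euler product on $\Re(s)>1/2$ does not by itself let you speak of simple poles of $h$ \emph{on} the line $\sigma=1/2$, nor evaluate $G(\rho)$ by a truncated Euler product (the product fails to converge absolutely there, since $\sum_p|p^{-2\rho}|=\sum_p p^{-1}$). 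One needs the further continuation $G(s)=F_2(s)/\zeta(2s)$, and in practice the paper's deeper factorization \eqref{turkey} through $F_6$, both to define and to compute the residues. You should also record the standard reduction that if RH fails or a zero is not simple then both oscillations are already infinite, so that assuming RH and simplicity costs nothing.
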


In particular, we show that under the assumption of the Riemann hypothesis and the simplicity of the zeros of $\zeta(s)$, the function $H(x)$ does not appear to be biased toward positive or negative values, quite unlike $L(x)$, but like the Mertens function. This is perhaps surprising given the initial bias recorded in Figure \ref{figMLH}, though, as seen in Figure \ref{figBigH}, this initial bias eventually dissipates. 

This article is organized in the following way.
Section~\ref{secSurvey} surveys results in the literature concerning the two functions $\omega(n)$ and $\Omega(n)$, highlighting a number of rather similar behaviors.
Section~\ref{secParity} investigates the parity of these two functions and proves that the density of positive integers where $\omega(n) \equiv \Omega(n)$ (mod $2$) is $0.73584\ldots$\,, establishing Theorem~\ref{thmAgree}.
Section~\ref{secIndependence} illustrates how $L(x)$ and $M(x)$ are connected to the Riemann hypothesis and other questions concerning the Riemann zeta function, and describes Grosswald's notion of weak independence and how it can be used to establish large oscillations in these summatory functions.
Section~\ref{secDirichletomega} analyzes the function $H(x)$ and an associated Dirichlet series in preparation for applying the machinery of weak independence to the question of oscillations in this function.
Section~\ref{secQualAn} then provides a qualitative analysis of $L(x)$ and $H(x)$ and their associated approximating series.
Section~\ref{secCalculations} summarizes our calculations on this problem, establishes bounds on the oscillations in $H(x)$, and proves Theorem~\ref{thmOscillate}.
It also describes our method for calculating values of $\xi(n)$ over an interval, which was used to generate the data for Figure~\ref{figBigH}.
Finally, Section~\ref{secOpen} lists some open problems regarding parity issues and related phenomena for $\Omega(n)$ and $\omega(n)$.

\section{Some results on $\omega(n)$ and $\Omega(n)$}\label{secSurvey}

We survey a number of results concerning $\Omega(n)$ and $\omega(n)$.
While this survey is not exhaustive, we include a number of results that highlight the rather similar behavior of $\omega(n)$ and $\Omega(n)$ in a number of analytic problems involving sums, order computations, distributional properties, and the attainment of special sequences of values.
For additional examples, we direct the reader to the reference book by S\'andor, Mitrinovi\'c, and Crstici \cite{Sandy}.

\subsection{Sums involving $\omega(n)$ and $\Omega(n)$}\label{subsecSumsOmegas}

Since we are interested in the partial sums $\sum_{n\leq x} (-1)^{\omega(n)}$ and $\sum_{n\leq x} (-1)^{\Omega(n)}$, the average orders $\sum_{n\leq x} \omega(n)$ and $\sum_{n\leq x} \Omega(n)$ are natural to consider at a first pass.
Hardy and Ramanujan \cite{HRam} proved that
\begin{equation}\label{throne}
\begin{split}
\sum_{n\leq x} \omega(n) &= x \log\log x + Ax + O\left(\frac{x}{\log x}\right),\\
\sum_{n\leq x} \Omega(n) &= x \log\log x + Bx + O\left(\frac{x}{\log x}\right),
\end{split}
\end{equation}
where
\[
A = \gamma + \sum_p \left(\log\left(1-\frac{1}{p}\right) + \frac{1}{p}\right) = 0.26149\ldots,
\]
and
\[
B = A + \sum_p \frac{1}{p(p-1)} = 1.03465\ldots.
\]
In the same paper, Hardy and Ramanujan proved that these two functions have the same normal order, that is,
\[
\bigg|\left\{n\leq x:\quad |f(n) - \log\log n| > (\log\log n)^{1/2+ \delta}\right\}\bigg| = o(x),
\]
for any positive $\delta$, where $f(n)$ may be $\omega(n)$ or $\Omega(n)$.
In connection with this, we mention the Erd\H{o}s--Kac theorem, which states that $\omega(n)$ and $\Omega(n)$ are normally distributed.
More precisely, for any fixed $y$, as $x\rightarrow\infty$ we have
\begin{equation*}\label{line}
x^{-1} \left| \left\{n\leq x: \quad \frac{g(n) - \log\log n}{\sqrt{\log\log n}} \leq y\right\}\right| \rightarrow \frac{1}{\sqrt{2\pi}} \int_{-\infty}^{y} \exp\left(-\frac{w^{2}}{2}\right)\, dw,
\end{equation*}
where $g(n)$ may be $\omega(n)$ or $\Omega(n)$.

The fact that $\omega(n)$ and $\Omega(n)$ have the same average order, as given in \eqref{throne}, can be compared with a result by Duncan \cite{MacBeth}:
\begin{equation}\label{charles}
\sum_{2\leq n \leq x} \frac{\Omega(n)}{\omega(n)} = x + O(x/\log\log x),
\end{equation}
which\footnote{We note that the error term in \eqref{charles} has been improved by De Koninck \cite{Kon74} and by Ivi\'{c} and De Koninck \cite{Kon80}.} says that on average $\Omega(n) = \omega(n)$.
This statement is also expressed by Ivi\'{c} \cite{Ivic88}, who proved that for $0<\delta<1$ and for $\delta \leq a \leq 2- \delta$ we have 
\[
\sum_{2\leq n \leq x} a^{\Omega(n)/\omega(n)} = ax + O\left(x(\log\log x)^{\epsilon -1}\right).
\]

Similarities also abound in moments of inverse powers of the omegas.
For instance, De Koninck \cite{Kon72} showed that
\[
\sum_{2\leq n \leq x} \frac{1}{\omega(n)} \sim \frac{x}{\log\log x} \sim  \sum_{2\leq n \leq x} \frac{1}{\Omega(n)}
\]
and
\[
\sum_{2\leq n \leq x} \frac{1}{\omega(n)^{2}} \sim \frac{x}{(\log\log x)^{2}} \sim  \sum_{2\leq n \leq x} \frac{1}{\Omega(n)^{2}},
\]
where each time the lower order terms are slightly different.
Some differences arise in other sums involving inverses: consider
\[
S_{1} = \sum_{2\leq n \leq x} n^{-1/\omega(n)}, \quad S_{2} = \sum_{2\leq n \leq x} n^{-1/\Omega(n)}.
\]
A result of Xuan \cite{Xuan} (improved in \cite{Xuan2}), which answers a question of Erd\H{o}s and Ivi\'{c} \cite{Ivic82}, states that there exist positive constants $c_1$ and $c_2$ so that
\[
S_{1} = x \exp\left(-(c_{1} + o(1))(\log x \log\log x)^{1/2}\right),
\]
but
\[
 S_{2} = x\exp\left(-(c_{2} + o(1)) \log^{1/2} x\right).
\]

\subsection{Sequences of special values}\label{subsecSeqSpecVals}

Here we collect some results on special values of these two omega functions.

Heath-Brown \cite{RHB} proved that $\Omega(n) = \Omega(n+1)$ for infinitely many $n$, and Schlage-Puchta \cite{Puchta} proved the analogous result for $\omega(n)$ by using the same method. Furthermore, Goldston, Graham, Pintz, and Y{\i}ld{\i}r{\i}m \cite{card} proved that $\omega(n) = \omega(n+1) = 3$ and $\Omega(n) = \Omega(n+1) = 4$ for infinitely many $n$.
This constancy in the omegas cannot occur too frequently, though, owing to the work of Erd\H{o}s, Pomerance, and S\'{a}rk\"{o}zy (see \cite{EPS2} and \cite{EPS3}), who showed that
\[
\#\{n\leq x \;:\; \omega(n) = \omega(n+1)\} = O\left(x/\sqrt{\log\log x}\right).
\]

Cooper and Kennedy \cite{Cooper} showed that it is unusual for $\omega(n)$ to divide $n$:
\begin{equation}\label{william}
\lim_{x\rightarrow\infty} \frac{\big|\{n\leq x \;:\; \omega(n)\mid n\}\big|}{x} = 0.
\end{equation}
Erd\H{o}s and Pomerance \cite{EPom} extended this by proving a more general theorem, which both recovers \eqref{william} and also establishes the analogous result for $\Omega(n)$.

One might also ask how often one of these omega functions produces an output that is relatively prime to its input.
Let
\[
A_{1}(x) = \#\{ n \leq x\; :\; (n, \Omega(n)) = 1\}
\]
and
\[
A_{2}(x) = \#\{ n \leq x\; :\; (n, \omega(n)) = 1\}.
\]
Vol'kovi\v{c} \cite{Volt} proved that $A_{1}(x) \sim 6x/\pi^2$, so this occurs with the same density as the squarefree numbers.
For $\Omega(n)$, Alladi \cite{Alladi} showed that $A_{2}(x) \sim 6x/\pi^2$ as well, and also produced an estimate of the error term.

\section{The parity of $\Omega(n) - \omega(n)$}\label{secParity}

As shown in Figure~\ref{figMLH}, the functions $L(x)$ and $H(x)$ appear to have qualitatively different behavior.
This may be surprising at first glance since the summands in these two functions are equal more often than not.
Certainly $\Omega(n) = \omega(n)$ if $n$ is squarefree, so for at least $6/\pi^2 = 0.6079\ldots$ of integers $n$ we have equal summands in $L(x)$ and $H(x)$.
Indeed, the summands are equal precisely when
\begin{equation}\label{duke}
\omega(n) \equiv \Omega(n) \pmod{2}.
\end{equation} 
The purpose of this section is to investigate the frequency for which \eqref{duke} holds in a more precise way, and establish the more exact estimate of Theorem~\ref{thmAgree}.

We note that a related result was first proved by R\'{e}nyi \cite{Ren} in 1955.
For $k\geq 0$, let $N_{k}(x)$ denote the number of integers not exceeding $x$ for which $\Omega(n) - \omega(n) = k$.
This is clearly reminiscent of (\ref{duke}).
R\'{e}nyi showed that $N_{k}(x) \sim d_{k} x$, where the $d_k$ are the power series coefficients of the meromorphic function
\begin{equation}\label{eqnRenyi}
\begin{split}
R(z) = \sum_{k\geq0} d_k z^k &= \prod_p\left(1-\frac{1}{p}\right)\left(1+\frac{1}{p-z}\right)\\
&= \frac{1}{\zeta(2)}\prod_p \left(1+\frac{z}{(p+1)(p-z)}\right),
\end{split}
\end{equation}
which converges on $\abs{z}<2$.
R\'enyi's work was later extended by Robinson \cite{Rob}.
For a detailed exposition of these results one may consult Section 2.14 and p.\ 71 in \cite{MV}.
It follows from \eqref{eqnRenyi} then that the limiting density $\beta$ of Theorem~\ref{thmAgree} exists and that $\beta=\sum_{k\geq0} d_{2k} = (1+R(-1))/2$.
We include an elementary and self-contained proof of Theorem~\ref{thmAgree} here, which also provides a means for estimating $\beta$.

\begin{proof}[Proof of Theorem~\ref{thmAgree}]
We first determine the density of positive integers $n$ whose powerful part $A$ has a fixed set of prime divisors $P=\{p_1,\ldots,p_r\}$.
That is, $n=AB$ with $A=p_1^{a_1+2}\cdots p_r^{a_r+2}$, each $a_i\geq0$, $B$ squarefree, and $\gcd(A,B)=1$.
The density of integers that are squarefree except possibly at the primes $p_1$, \ldots, $p_r$ is
\[
\prod_{p\not\in P} \left( 1 - \frac{1}{p^{2}}\right) = \frac{1}{\zeta(2)}\prod_{i=1}^r \frac{p_i^2}{p_i^2-1}.
\]
Fix nonnegative integers $a_1$, \ldots, $a_r$.
The density of integers with powerful part $A$ is then
\[
\frac{6}{\pi^2}\prod_{i=1}^r \frac{p_i^2}{p_i^2-1} \cdot \prod_{i=1}^r \left(\frac{1}{p_i^{a_i+2}} - \frac{1}{p_i^{a_i+3}}\right)
= \frac{6}{\pi^2}\prod_{i=1}^r \frac{1}{p_i^{a_i+1}(p_i+1)}.
\]
We next sum this over allowable values for the $a_i$: we require that $a_1+\cdots+a_r\equiv r$ mod $2$, with each $a_i\geq0$.
Let $\mathcal{A}$ denote the set of allowable integer vectors $\mathbf{a}=(a_1,\ldots,a_r)$.
Now
\begin{equation}\label{eqnMatch}
\begin{split}
\sum_{\mathbf{a}\in\mathcal{A}} \frac{6}{\pi^2}\prod_{i=1}^r \frac{1}{p_i^{a_i+1}(p_i+1)}
&=
\frac{6}{\pi^2}\prod_{i=1}^r \frac{1}{p_i(p_i+1)} \sum_{\mathbf{a}\in\mathcal{A}} \prod_{i=1}^r \frac{1}{p_i^{a_i}}\\
&=
\frac{6}{\pi^2}\prod_{i=1}^r \frac{1}{p_i(p_i+1)} \biggl(\sum_{\substack{\mathbf{v}\in\{0,1\}^r\\\sum_{i=1}^r v_i \equiv r (2)}} \prod_{i=1}^r \sum_{k\geq0} \frac{1}{p_i^{2k+v_i}}\biggr)\\
&=
\frac{6}{\pi^2}\prod_{i=1}^r \frac{1}{p_i(p_i+1)} \biggl(\sum_{\substack{\mathbf{v}\in\{0,1\}^r\\\sum_{i=1}^r v_i \equiv r (2)}} \prod_{i=1}^r \frac{p_i^{2-v_i}}{p_i^2-1}\biggr)\\
&=
\frac{6}{\pi^2}\prod_{i=1}^r \frac{1}{(p_i+1)(p_i^2-1)} \biggl(\sum_{\substack{\mathbf{v}\in\{0,1\}^r\\\sum_{i=1}^r v_i \equiv r (2)}} \prod_{i=1}^r p_i^{1-v_i}\biggr)\\
&=
\frac{6}{\pi^2}\prod_{i=1}^r \frac{1}{(p_i+1)(p_i^2-1)} \biggl(\sum_{\substack{\mathbf{v}\in\{0,1\}^r\\\sum_{i=1}^r v_i \equiv 0 (2)}}  \prod_{i=1}^r p_i^{v_i}\biggr).
\end{split}
\end{equation}
This is then the density of integers with multiplicity greater than $1$ on precisely the set of primes $P$, and which have $\Omega(n) \equiv \omega(n)$ (mod $2$).

In the same way, one can show that the density of integers with powerful part whose prime set is precisely $P$, but where $\Omega(n) \not\equiv \omega(n)$ (mod $2$), is
\begin{equation}\label{eqnNonmatch}
\frac{6}{\pi^2}\prod_{i=1}^r \frac{1}{(p_i+1)(p_i^2-1)} \biggl(\sum_{\substack{\mathbf{v}\in\{0,1\}^r\\\sum_{i=1}^r v_i \equiv 1 (2)}}  \prod_{i=1}^r p_i^{v_i}\biggr).
\end{equation}

We may now compute a lower estimate for the density of the integers where $\Omega(n) \equiv \omega(n)$ (mod $2$) by summing the values of \eqref{eqnMatch} over various sets of primes.
We may compute an upper estimate by performing the same procedure on \eqref{eqnNonmatch} and subtracting the result from $1$.
The existence of the limiting density $\beta$ then follows by showing that the upper and lower estimates converge to the same value.
Indeed, the difference in the upper and lower estimates for a fixed set $P=\{p_1,\ldots,p_r\}$ is
\[
1 - \frac{6}{\pi^2}\prod_{i=1}^r \frac{1}{(p_i+1)(p_i^2-1)} \biggl(\sum_{\mathbf{v}\in\{0,1\}^r}  \prod_{i=1}^r p_i^{v_i}\biggr)
= 1 - \frac{6}{\pi^2}\prod_{i=1}^r \frac{1}{p_i^2-1}.
\]
Summing the latter term over all finite subsets $P\subset\mathcal{P}$, where $\mathcal{P}$ denotes the set of all positive primes, we see that
\[
\frac{6}{\pi^2} \sum_P \prod_{p\in P} \frac{1}{p^2-1}
= \frac{6}{\pi^2} \sum_P \prod_{p\in P} \sum_{k\geq1} p^{-2k}
= \frac{6}{\pi^2} \prod_{p\in\mathcal{P}} \biggl(1+\sum_{k\geq1} p^{-2k}\biggr)
= \frac{6}{\pi^2}\zeta(2)
= 1,
\]
so it follows that the limiting density $\beta$ exists.

To estimate its value, we employ a number of subsets of primes.
For $r=1$, we use all primes up to $3\cdot 10^6$; for $r=2$, we consider all pairs of primes with each at most $17500$; for $r=3$, the bound is $1500$; for $r=4$, we use $450$; $r=5$, $250$; and $r=6$, $170$.
(Of course, $r=0$ provides a contribution of $6/\pi^2$ to the lower bound as well.)
This allows us to compute that
\[
0.735836 < \beta < 0.735844.
\]

We can improve these bounds by considering instead all sets of primes $P=\{p_1,\ldots,p_r\}$ for which the product $p_1\cdots p_r$ is bounded by a particular constant $B$.
Using $B=10^7$, we compute
\[
0.735840285 < \beta < 0.735840329.\qedhere
\]
\end{proof}

We conclude this section by noting that Detrey, Spaenlehauer, and Zimmermann \cite{Zim} recently reported the value of $\beta$ to 1000 decimal digits by using a derivation of $\beta$ as $(1+R(-1))/2$, with $R(z)$ given by \eqref{eqnRenyi}, and a computational strategy of Moree \cite{Moree}.
They find
\[
\beta = 0.735840306806498934037617816540241043712963\ldots.
\]

\section{Weak independence and oscillations}\label{secIndependence}

Classical methods (see, e.g., \cite[Chapter 15]{MV}) show that if either the Riemann hypothesis fails or if a nontrivial zero is not simple, then
\begin{equation*}\label{archie}
\limsup_{x\rightarrow\infty} \frac{L(x)}{x^{1/2}} = \infty, \quad\liminf_{x\rightarrow\infty} \frac{L(x)}{x^{1/2}} =-\infty,  
\end{equation*}
and the same holds for $M(x)/\sqrt{x}$ and $H(x)/\sqrt{x}$.
Therefore, to exhibit extreme values of these functions, we may as well assume both the Riemann hypothesis and the simplicity of the zeros. 
In this section, we briefly review Ingham's work connecting oscillations to the linear independence problem for the nontrivial zeros of the zeta function.
We then recall how establishing weak independence for certain subsets produces lower bounds on oscillations for such functions.
For additional details, see \cite{Ingham1942,MosTru,MT2017}.

Following Ingham, let $A(u)$ denote a real function on $[0,\infty)$ that is absolutely integrable over every finite interval $[0,U]$, with Laplace transform
\begin{equation}\label{banquo}
F(s) = \int_0^\infty A(u) e^{-su}\,du,
\end{equation}
which is convergent in a half-plane $\sigma > \sigma_1 \geq0$.
Suppose that $F(x)$ can be analytically continued to the region $\sigma\geq0$, except for a sequence of simple poles that occur symmetrically along the imaginary axis, with one possibly at the origin, and the others at locations $\{i\gamma_n,-i\gamma_n\}$ for an increasing sequence of positive real numbers $\{\gamma_n\}$.
Let $r_{\gamma_n}$ denote the residue of $F(s)$ at $i\gamma_n$, and let $r_0$ denote the residue at the origin, if indeed there is a pole there, otherwise take $r_0=0$.
Next, fix a real number $T>1$, and let $k_T(u)$ be an even function which is supported on $[-T,T]$, has $k_T(0)=1$, and is the Fourier transform of a nonnegative function $K_T(y)$.
Ingham employed the F\'ejer kernel,
\begin{equation}\label{eqnFejerkernel}
k_T(t) =
\begin{cases}
1 - \abs{t}/T, & \abs{t} \leq T,\\
0, & \abs{t} > T.
\end{cases}
\end{equation}
but, following \cite{Best,MT2017,OTR} we use the somewhat more advantageous kernel of Jurkat and Peyerimhoff \cite{JP},
\begin{equation}\label{eqnJPkernel}
k_T(t) =
\begin{cases}
(1-\abs{t}/T)\cos(\pi t/T) + \sin(\pi\abs{t}/T)/\pi, & \abs{t} \leq T,\\
0, & \abs{t} > T.
\end{cases}
\end{equation}
Next, let
\[
B_T^*(u) = r_0 + 2\Re\sum_{0<\gamma_n<T} k_T(\gamma_n) r_{\gamma_n} e^{i\gamma_n u}.
\]
Ingham's work then shows that for any positive real number $v$,
\[
\liminf_{u\to\infty} A(u)
\leq \liminf_{u\to\infty} B^*_T(u)
\leq B_T^*(v)
\leq \limsup_{u\to\infty} B^*_T(u)
\leq \limsup_{u\to\infty} A(u).
\]
Haselgrove \cite{Haselgrove} employed this approach to establish a negative answer to P\'olya's question: taking $T=1000$, he computed that $B_T^*(831.846)>0$ and $B_T^*(853.853)<0$ for the function $A_L(u) = e^{-u/2}L(e^u)$.
Note in this case the Laplace transform is
\begin{equation}\label{fleance}
F_L(s) = \frac{\zeta(2s+1)}{(s+1/2)\zeta(s+1/2)},
\end{equation}
which (on the Riemann hypothesis and the simplicity of the zeros of $\zeta(s)$) has simple poles at $s=0$ and at $s=i\gamma_n$, where the nontrivial zeros of the zeta function in the upper half plane are $\rho_n=1/2+i\gamma_n$ with $\{\gamma_n\}$ increasing.

When the poles of $F(s)$ on the positive imaginary axis are the zeros of the zeta function, Ingham then showed that as long as $\sum_{n\geq1} \abs{r_{\gamma_n}}$ diverges, and there are only finitely many integer relations among the values $\{\gamma_n\}$, then it follows that $\liminf _{u\to\infty}A(u) = -\infty$ and $\limsup_{u\to\infty} A(u) = \infty$.

Odlyzko and te Riele employed this result in their disproof of the Mertens conjecture.
Here,
\begin{equation}\label{escape}
F_M(s) = \frac{1}{(s+1/2)\zeta(s+1/2)},
\end{equation}
so there is no pole at the origin ($r_0=0$), and the sequence of residues is
\[
r_\gamma = \frac{1}{\rho\zeta'(\rho)},
\]
where we write $\rho$ for $1/2+i\gamma$.
They selected the $70$ largest residues $r_\gamma$ among the first $1000$ zeros of the zeta function, and then employed the LLL lattice reduction algorithm to solve a simultaneous approximation problem, in order to find one value of $u$ where those $70$ terms of the function $B^*_T(u)$ all lined up in a direction very near that of the positive real axis, and another value of $u$ where they all lined up very nearly in the direction of the negative real axis.
Hurst employed a similar method in one component of \cite{Hurst}, with a significantly larger number of zeros (800).

Developments by Bateman et al.\ \cite{Bateman}, Grosswald \cite{Grosswald67, Grosswald72}, Diamond \cite{Diamond}, and others established a partial version of Ingham's result connecting the independence criterion to the conclusion of having unbounded oscillations in the function $A(u)$.
Roughly, their results state that if a collection of zeros is ``sufficiently'' linearly independent, then one can conclude that the function in question has ``large'' oscillations.
To make this more precise, we require a few definitions.

Following \cite{Best}, let $\Gamma$ denote a set of positive real numbers, and given a real $T>1$ let $\Gamma'=\Gamma'(T)$ denote a subset of $\Gamma \cap [0,T]$.
For each $\gamma\in\Gamma'$, let $N_\gamma$ denote a positive integer.
We say $\Gamma'$ is \textit{$\{N_{\gamma}\}$-independent} in $\Gamma \cap [0, T]$ if two conditions hold.
First, whenever
\begin{equation}\label{ind1}
\sum_{\gamma\in\Gamma'} c_{\gamma} \gamma = 0, \quad \textrm{with } |c_{\gamma}| \leq N_{\gamma}, \quad c_{\gamma} \in \mathbb{Z},
\end{equation}
then necessarily all $c_{\gamma} = 0$.
Second, for any $\gamma^{*}\in \Gamma \cap [0, T]$, if
\begin{equation}\label{ind2}
\sum_{\gamma\in\Gamma'} c_{\gamma} \gamma = \gamma^{*}, \quad \textrm{with } |c_{\gamma}| \leq N_{\gamma},
\end{equation}
then $\gamma^{*} \in \Gamma'$, that $c_{\gamma*} = 1$, and all other $c_{\gamma} = 0$.
That is, the only linear relations with coefficients bounded by the $\{N_\gamma\}$ are the trivial ones.

In addition, if $N$ is a positive integer with the property that each $N_\gamma \geq N$ with $\{N_\gamma\}$ as above, then we say that $\Gamma'$ is \textit{$N$-independent} in $\Gamma \cap [0, T]$.
For example, Bateman et al.\ \cite{Bateman} showed that the ordinates of the first $20$ zeros of $\zeta(s)$ on the critical line are $1$-independent, in connection with their study of the Mertens conjecture.
Best and the second author \cite{Best} proved, among other things, that the first $500$ zeros are $10^{5}$-independent, in connection with work on the same problem.

We now use the machinery detailed in our earlier work \cite{MosTru} and \cite{MT2017}. 
Let $T>0$, and let $\Gamma$ denote a set of positive numbers. Define $\Gamma'= \Gamma'(T)$ as a subset of $\Gamma$ such that every $\gamma\in\Gamma'$ lies in the range $0< \gamma< T$. Finally, let $\{N_{\gamma}\}$ be a set of positive integers defined for $\gamma \in \Gamma'$.
Anderson and Stark \cite{AndStark} established the following result, which allows for explicit bounds in these oscillation problems.
\begin{theorem}[Anderson and Stark \cite{AndStark}]\label{thmAndSta}
With $A(u)$, $\Gamma$, $T$, $\Gamma'$, and $k_T(u)$ as above, if the elements of $\Gamma'$ are $\{N_{\gamma}\}$-independent in $\Gamma \cap[0,T]$, then
\begin{equation*}\label{AndS1}
\liminf_{u\rightarrow\infty} A(u) \leq r_{0} - 2\sum_{\gamma \in \Gamma'} \frac{N_{\gamma}}{N_{\gamma} + 1} k_{T}(\gamma) |r_{\gamma}|
\end{equation*}
and
\begin{equation*}\label{AndS2}
\limsup_{u\rightarrow\infty} A(u) \geq r_{0} + 2\sum_{\gamma \in \Gamma'} \frac{N_{\gamma}}{N_{\gamma} + 1} k_{T}(\gamma)  |r_{\gamma}|.
\end{equation*}
\end{theorem}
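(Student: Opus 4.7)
The plan is to invoke Ingham's inequality
\[
\limsup_{u\to\infty} A(u) \geq \limsup_{u\to\infty} B_T^*(u),
\]
and exhibit, by a Fej\'er-kernel construction, a non-negative almost-periodic function $P(u)$ of mean $1$ whose long-range average against $B_T^*$,
\[
\overline{B_T^* P} := \lim_{V\to\infty} \frac{1}{V}\int_0^V B_T^*(u) P(u)\,du,
\]
equals exactly $r_0 + 2\sum_{\gamma\in\Gamma'} \frac{N_\gamma}{N_\gamma+1}k_T(\gamma)|r_\gamma|$. Since $P\geq 0$ has mean $1$, this average is bounded above by $\limsup_u B_T^*(u)$, and chaining the two inequalities yields the $\limsup$ half of the theorem; the $\liminf$ half is symmetric.

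The key construction is
\[
P(u) = \prod_{\gamma\in\Gamma'} F_{N_\gamma}\bigl(\gamma u + \arg r_\gamma\bigr),
\qquad F_N(x) = \sum_{|k|\leq N}\left(1 - \frac{|k|}{N+1}\right)e^{ikx},
\]
where $F_N$ is the classical Fej\'er kernel, non-negative with mean $1$. Expanding the product, $P(u)$ becomes a finite sum of complex exponentials $e^{iu\sum_\gamma k_\gamma\gamma}$ indexed by tuples $(k_\gamma)_{\gamma\in\Gamma'}$ with $|k_\gamma|\leq N_\gamma$, and the phase shifts $\arg r_\gamma$ are chosen so that whenever a frequency of $P$ cancels a frequency of $B_T^*$, the surviving contribution to $\overline{B_T^* P}$ is a positive real multiple of $|r_\gamma|$ rather than an arbitrary complex number.

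I would then compute the two averages. The mean of $P$ picks up only the constant-frequency piece of its expansion, whose existence requires a relation $\sum_\gamma k_\gamma\gamma = 0$ with $|k_\gamma|\leq N_\gamma$; condition (\ref{ind1}) of $\{N_\gamma\}$-independence forces every $k_\gamma = 0$, so $\overline{P} = 1$. For $\overline{B_T^* P}$, the $r_0$ term contributes $r_0\cdot\overline{P} = r_0$, and each frequency $\gamma^*\in\Gamma\cap(0,T)$ appearing in $B_T^*$ contributes only when $\gamma^* + \sum_\gamma k_\gamma\gamma = 0$. Applying condition (\ref{ind2}) with $c_\gamma = -k_\gamma$ forces $\gamma^*\in\Gamma'$, $k_{\gamma^*} = -1$, and all other $k_\gamma = 0$; a short calculation using $r_{\gamma^*}e^{-i\arg r_{\gamma^*}} = |r_{\gamma^*}|$ shows that this term contributes $\frac{N_{\gamma^*}}{N_{\gamma^*}+1} k_T(\gamma^*) |r_{\gamma^*}|$, with an identical contribution from the conjugate frequency $-\gamma^*$. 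Summing over $\gamma^*\in\Gamma'$ produces the claimed expression.

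The main technical obstacle is the analytic bookkeeping for these mean values. Because only finitely many $\gamma_n$ lie in $(0,T)$, both $B_T^*$ and $B_T^* P$ are honest trigonometric polynomials in Bohr's sense, which legitimizes $\overline{B_T^* P}$ and the spectrum-matching identifications above; the inequality $\overline{B_T^* P} \leq \overline{P}\cdot\limsup_u B_T^*(u)$ then follows by combining $P\geq 0$ with an $\varepsilon$-truncation of the $\limsup$, the contribution from an initial finite segment of integration vanishing in the $V\to\infty$ limit. For the $\liminf$ half, one repeats the argument after translating each $\arg r_\gamma$ by $\pi$, which leaves $P\geq 0$ intact but multiplies each surviving contribution by $e^{-i\pi} = -1$, converting $|r_\gamma|$ to $-|r_\gamma|$ in the final sum; combined with $\overline{B_T^* P}\geq \overline{P}\cdot\liminf_u B_T^*(u)$, this delivers the claimed upper bound for $\liminf A$.
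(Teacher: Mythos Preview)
The paper does not actually supply a proof of this theorem: it states the result, attributes it to Anderson and Stark \cite{AndStark} for the Fej\'er kernel, and remarks that the extension to more general kernels $k_T$ was carried out by the authors in \cite{MT2017}. Your sketch is a faithful reconstruction of the Anderson--Stark argument itself: the product of phase-shifted Fej\'er kernels is precisely the non-negative trigonometric polynomial they employ, and your two invocations of $\{N_\gamma\}$-independence---once via \eqref{ind1} to show $\overline{P}=1$, once via \eqref{ind2} to isolate the diagonal contributions in $\overline{B_T^*P}$---are the core of that proof. So while there is nothing in the present paper to compare against line by line, your proposal matches the method of the cited source and is correct.
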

Anderson and Stark proved Theorem~\ref{thmAndSta} for the Fej\'{e}r kernel \eqref{eqnFejerkernel}; their proof was generalized by the authors \cite{MT2017} to take advantage of some latitude in the choice of this function.
In particular, the kernel of Jurkat and Peyerimhoff \eqref{eqnJPkernel} qualifies.

We next describe a method for determining values for the $N_\gamma$, given a function $A(u)$ which produces a transform $F(s)$ having simple poles on the imaginary axis as above.
We aim to determine a single value $N$ that suffices for each $\gamma$ in our set, so that we may conclude $N$-independence.
This follows the method developed in \cite{Best} and \cite{MT2017}.

Select positive integers $m$ and $n$ with $m\geq n$.
Let $\Gamma$ denote the set of ordinates of the zeros of the zeta function in the upper half-plane, and choose $T=\gamma_{m+1} -\epsilon$, with $\epsilon$ sufficiently small so that $\gamma_m<T$.
Using the kernel function \eqref{eqnJPkernel}, let $\Gamma'$ denote the subset of $\Gamma\cap[0,T]$ of size $n$ consisting of those elements $\gamma$ for which the value of $k_T(\gamma)\abs{r_\gamma}$ is largest.
Consider $m-n+1$ lattices $\Lambda_0$, \ldots, $\Lambda_{m-n}$ created by using the elements of $\Gamma\cap[0,T]$, and a parameter $b$ that governs the number of bits of precision required in the calculations.
The lattice $\Lambda_0$ has dimension $n$ and lies in $\mathbb{R}^{n+1}$: its basis is the identity matrix $I_n$ augmented by adding the row $(\rd{2^b\gamma_{a_1}}, \ldots, \rd{2^b\gamma_{a_n}})$, where $\Gamma'=\{\gamma_{a_1},\ldots,\gamma_{a_n}\}$ and $\rd{x}$ denotes the integer nearest $x$.
The subsequent lattices $\Lambda_i$ have dimension $n+1$ and lie in $\mathbb{R}^{n+2}$, and are formed in the same way, by appending the row $(\rd{2^b\gamma_{a_1}}, \ldots, \rd{2^b\gamma_{a_n}}, \rd{2^b\gamma^*})$ to $I_{n+1}$, for some $\gamma^* \in (\Gamma\cap[0,T])\backslash\Gamma'$.

If $\Gamma'$ were $N$-dependent in $\Gamma\cap[0,T]$, for some positive integer $N$, then some nontrivial relation of the form \eqref{ind1} or \eqref{ind2} would exist, with the coefficients $c_\gamma$ satisfying $\abs{c_\gamma}\leq N$.
If a nontrivial relation of the form \eqref{ind1} exists, then
\[
v=\left(c_{\gamma_{a_1}},\ldots,c_{\gamma_{a_n}},\sum_{i=1}^n c_{\gamma_{a_i}}\rd{2^b\gamma_{a_i}}\right)
\]
is a nonzero vector in $\Lambda_0$.
A short calculation determines a bound on its length:
\[
\abs{v}^2 \leq \left(\frac{n^2}{4}+n\right)N^2.
\]
Similarly, a violation of the second condition of $N$-independence \eqref{ind2} implies the existence of a nonzero vector $v$ in a corresponding lattice $\Lambda_i$ with $i>0$; here the bound is
\[
\abs{v}^2 \leq \left(\frac{n^2}{4}+n\right)N^2 + \left(\frac{n}{2}+2\right)N + \frac{5}{4}.
\]
Thus, if we can show that none of the lattices $\Lambda_0$, \ldots, $\Lambda_{m-n}$ contains a nonzero vector with length smaller than the bounds appearing here, for a particular value of $N$, then it would follow that $\Gamma'$ is $N$-independent in $\Gamma\cap[0,T]$.
To test this, we first run each basis through the LLL lattice reduction algorithm to obtain an equivalent basis that is better suited to our application---all the vectors in the original basis of $\Lambda_i$ point in nearly the same direction; in a reduced basis, the comprising vectors are in a sense more orthogonal.
Then we employ the Gram--Schmidt orthogonalization procedure on these new bases.
Since the length of a nonzero vector in a lattice is bounded below by the minimal length of a vector in the Gram--Schmidt orthogonalization of a basis of the lattice (see \cite[Lemma~1]{MT2017} for a proof), for each lattice $\Lambda_i$ we can select a value for $N_i$ to be assured that lattice $\Lambda_i$ has no such short vector.
Let $N$ be the minimal value of $N_0$, \ldots, $N_{m-n}$.
We conclude our set is $N$-independent in $\Gamma\cap[0,T]$, and we may then employ Theorem~\ref{thmAndSta} with $N_\gamma = N$ for each $\gamma\in\Gamma'$ to obtain bounds on the oscillations in $A(u)$.

\section{The Dirichlet series $h(s) = \sum_{n\geq1} (-1)^{\omega(n)}n^{-s}$}\label{secDirichletomega}

We now turn to the function $H(x)$.
To apply the Ingham--Grosswald machinery, we first need to find the Dirichlet series $h(s)$ whose coefficients are $(-1)^{\omega(n)}$.
In \eqref{banquo}, then, $e^{-u/2}H(e^u)$ then relates to $A(u)$, and $h(s)$, after suitable rescaling and shifting, relates to the  expressions in \eqref{fleance} and \eqref{escape}: $F_H(s) = h(s+1/2)/(s+1/2)$.
To that end, for $\sigma>1$ let 
\begin{equation}\label{eqnhDS}
h(s) = \sum_{n=1}^{\infty} \frac{(-1)^{\omega(n)}}{n^{s}} = \prod_{p} \left( 1 - \frac{1}{p^{s}} - \frac{1}{p^{2s}} - \cdots\right)= \zeta(s) \prod_{p} \left( 1 - \frac{2}{p^{s}}\right),
\end{equation}
where the latter equalities follow from considering the Euler product for $\zeta(s)$ and noting that $\omega(p^{e}) = 1$ for all $e\geq 1$.

We want $h(s)$ in \eqref{eqnhDS}, when continued analytically, to have only simple poles on the line $\sigma=\frac{1}{2}$, where as usual we write $s=\sigma+it$.
Note that $(1-2/p^{s}) \approx ( 1+ 2/p^{s})^{-1}$, and that 
\begin{equation}\label{run}
\zeta^{2}(s) = \prod_{p} \left( 1 + \frac{1}{p^{s}} + \frac{1}{p^{2s}} +\cdots\right)^{2} = \prod_{p} \left(1 + \frac{2}{p^{s}} + \cdots\right).
\end{equation}
Therefore the Euler product $\prod_{p} (1-2/p^{s})$ resembles the reciprocal of the left-hand side in \eqref{run}.
We write
\begin{equation}\label{tank}
h(s) = \zeta(s) \cdot \frac{1}{\zeta^{2}(s)} F_{1}(s) = \frac{F_{1}(s)}{\zeta(s)}, 
\end{equation}
and solve for $F_{1}(s)$ to see that
\begin{equation*}\label{jump}
F_{1}(s) = h(s)\zeta(s) = \prod_{p} \left( 1 -\frac{1}{p^{2s}} - \cdots \right).
\end{equation*}
We therefore have an expression in \eqref{tank} giving $h(s)$ in terms of $F_{1}(s)$, the latter being analytic for $\sigma > 1/2$.
We now continue, noting that $F_{1}(s)$ resembles the Euler product of $\zeta(2s)^{-1}$.
Writing $F_{1}(s) = F_{2}(s)/\zeta(2s)$, and solving for $F_{2}(s)$ we have
\begin{equation*}\label{eqnF2}
h(s) = \frac{F_{2}(s)}{\zeta(s)\zeta(2s)}, \quad F_{2}(s) = \prod_{p} \left( 1 - \frac{2}{p^{3s}} - \cdots \right),
\end{equation*}
whence we see that $F_{2}(s)$ is analytic for $\sigma>1/3$.
We continue in this way, each time using a suitable power of $\zeta(ks)$ to mimic the preceding $F_{k}(s)$.
We find that
\begin{equation*}\label{baton}
h(s) = \frac{F_{k}(s)}{\zeta(s) \zeta(2s) \zeta^{2}(3s) \cdots \zeta^{a_{k}}(ks)},
\end{equation*}
with $F_{k}(s)$ analytic for $\sigma > \frac{1}{k+1}$ and $k\geq 1$, where the sequence\footnote{This sequence appears in the OEIS as A059966.} $\{a_{k}\}_{k\geq 1}$ begins $1$, $1$, $2$, $3$, $6$, \ldots, and is defined by the formal product
\[
\prod_{k=1}^{\infty} \left(1-q^{k}\right)^{a_{k}} = 1 - q - q^{2} - q^{3}  - \cdots.
\]

Choosing $k=6$ we have
\begin{equation}\label{turkey}
h(s) = \frac{F_{6}(s)}{\zeta(s)\zeta(2s)\zeta^{2}(3s)\zeta^{3}(4s)\zeta^{6}(5s)\zeta^{9}(6s)},
\end{equation}
where $F_{6}(s) = \prod_{p} (1 - 18p^{-7s} - 30 p^{-8s} - 56 p^{-9s} - \cdots)$.
Since $F_{6}(s)$ converges for $\sigma > 1/7$ we can see that $h(s)$ has zeros at $s=1$, $s=1/2$, $1/3$, \ldots, $1/6$ and poles at $s = \rho_n/k$ for $k=1$, $2$, $3$. Indeed, for $k=3$ there are double poles of $h(s)$.

This gives rise to some lower-order terms when we apply Ingham's method.
Although the poles on $\sigma = 1/2$ drive the oscillations, we have some minor contributions from poles at $\sigma = 1/4$, $1/6$, \ldots\,.
This contrasts markedly with the Dirichlet series $ \sum_{n\geq 1} (-1)^{\Omega(n)}n^{-s}=\zeta(2s)/\zeta(s)$.
Under our working assumption of the Riemann hypothesis, the function $\zeta(2s)/\zeta(s)$ has no poles to the left of the `driving terms' $s = \frac{1}{2} + i\gamma$.
Moreover, $h(1) = 0$, which is what van de Lune and Dressler \cite{Dressler} showed.

Let us examine the residues on $\sigma = 1/2$.
There, we have only simple poles, and the residue of $1/\zeta(s)$ at $s=\rho_n$ is just $\zeta'(\rho_n)^{-1}$.
Therefore, since all other factors in $h(s)$ in \eqref{turkey} are analytic at $s=\rho_n$, we find that the residue of $h(s)/s$ at $s=\frac{1}{2}+i\gamma_n$ is
\begin{equation}\label{carson}
r_{\gamma_n} = \frac{F_{6}(\rho_n)}{\zeta(2\rho_n)\zeta^{2}(3\rho_n)\zeta^{3}(4\rho_n)\zeta^{6}(5\rho_n)\zeta^{9}(6\rho_n)} \cdot \frac{1}{\rho_n\zeta'(\rho_n)}.
\end{equation}

We note that to apply Ingham's method we need to verify that $\sum_{n\geq 1} |r_{\gamma_{n}}|$ diverges.
In the case of $M(x)$, this follows from Titchmarsh \cite[Section 14.27]{Touchy}; Ingham \cite[Theorem 2]{Ingham1942} reproved this, and extended the result to cover $L(x)$.
In our case we follow Ingham's arguments on \cite[p.\ 317]{Ingham1942}, noting that we merely need a decent lower bound on $\zeta(s)$ when $\sigma\geq 1$, where $s = \sigma + it$.
Although stronger bounds are known, even the classical result that $|\zeta(s)|^{-1} = O(\log t)$ for $\sigma \geq 1$ --- see, e.g., \cite[Section 3.11]{Touchy} --- suffices.  Ingham's proof therefore applies to the divergence of $\sum_{n\geq 1} |r_{\gamma_{n}}|$ with $r_{\gamma_{n}}$ in \eqref{carson}.

We can also add in the contributions from the residues on $\sigma = 1/4$.
There, we have only simple poles, and we have that the residue of $1/\zeta(2s)$ at $s=\rho_n/2$ is $1/2\zeta'(\rho_n)$.
Therefore, since all other factors in $h(s)$ in \eqref{turkey} are analytic at $s=\rho_n/2$ we have that the residue here is
\begin{equation}\label{boise}
\frac{F_{6}(\rho_n/2)}{\zeta(\rho_n/2)\zeta^{2}(3\rho_n/2)\zeta^{3}(2\rho_n)\zeta^{6}(5\rho_n/2)\zeta^{9}(3\rho_n)} \cdot \frac{1}{\rho_n\zeta'(\rho_n)}.
\end{equation}
We could therefore add in \eqref{boise} to try to correct for the poles due to $\zeta(2s)$ in $h(s)$.
Terms from \eqref{boise} will be weighted by $e^{u/4}$, which is smaller than the weighting $e^{u/2}$ applied to the terms in \eqref{carson}.
We could continue in this way, adding in the contribution from the multiple poles at $s=\rho_n/3$, $s=\rho_n/4$, etc. It is the minor contribution from these poles, and those at $s=\rho_{n}/2$, that is the source of the initial bias, which eventually dissipates --- cf.\ Figures \ref{figMLH} and \ref{figBigH}.

\section{Qualitative analysis}\label{secQualAn}

Using Perron's formula, and following the procedure in \cite{MosTru}, we have
\begin{equation}\label{lure}
\frac{L(e^u)}{e^{u/2}} = \frac{1}{\zeta(1/2)} + \sum_{|\gamma_{n}|\leq T} \frac{\zeta(2\rho_{n}) e^{i\gamma_{n}u}}{\rho_{n}\zeta'(\rho_{n})} + E(u, T),
\end{equation}
where $E(u, T)$ is an error term for which $\lim_{u\rightarrow\infty}E(u, T) = 0$.
In a loose sense, the fact that $\zeta(1/2)<0$ accounts for the negative bias of $L(x)$.
This value is evident in the plot of $e^{-u/2}L(e^u)$ in Figure~\ref{figMLH}, as this curve oscillates about the displayed horizontal line $y=1/\zeta(1/2)=-0.6847\ldots$\,.
Similarly, the fact that infinitely often the sum over the zeros exceeds this constant term accounts for the infinitude of integers that provide a negative answer to P\'{o}lya's question.

More formally, Humphries \cite{HumphriesJNT}, following the work of Rubinstein and Sarnak \cite{RS}, examined the logarithmic density of those $x$ for which $L(x)\leq 0$. Under the Riemann hypothesis, the linear independence hypothesis, and the discrete moment conjecture\footnote{This is the statement that $\sum_{0<\gamma_{n}<T} |\zeta'(\rho_{n})|^{-2}\ll T.$}, Humphries was able to prove that $L(x)\leq 0$ at least half of the time.
See \cite[Thm.\ 1.5]{HumphriesJNT} for more details. We also mention here computations performed by Brent, and cited in \cite[p.\ 577]{HumphriesJNT}, which suggest that the limiting logarithmic density of the set of $y$ for which $L(y)\leq 0$ is approximately $0.99988$.
 
Applying the Dirichlet series for $H(x)$ from the previous section, and working through the same procedure, we have
\begin{equation}\label{trout}
\frac{H(u)}{e^{u/2}} = \sum_{|\gamma_{n}|\leq T} \frac{F_6(\rho_n) e^{i\gamma_{n}u}}{\rho_{n}\zeta'(\rho_n)\zeta(2\rho_n)\zeta^{2}(3\rho_n)\zeta^{3}(4\rho_n)\zeta^{6}(5\rho_n)\zeta^{9}(6\rho_n)} + E(u, T).
\end{equation}
Note that, unlike \eqref{lure}, there is no leading term in \eqref{trout}, since $r_0=0$ in this case, and hence there is no bias.

We now compare the estimate of \eqref{trout} with the empirical data.
Figure~\ref{figHsamp} exhibits the actual values of $e^{-u/2}H(e^u)$ over $24\leq u\leq 27$, so approximately $2.5\cdot10^{10}\leq x\leq 5\cdot10^{11}$, and Figure~\ref{figHest} displays the main term of \eqref{trout} with $T=5000$ over this same range.
These plots show a very good match for our estimate in \eqref{trout} in the \textit{local} behavior of $H(x)$, in its match of the oscillations, but our estimate does not capture some \textit{global} structure, in a carrier wave that these oscillations are centered on.
We found a similar phenomenon in \cite{MT2017}, in the investigation of $L_{1/2}(x) = \sum_{n\leq x} \lambda(n)/\sqrt{n}$.
Here, a pole of order 2 occurred at the origin in the Laplace transform of $L_{1/2}(e^u)$, and its presence caused the oscillations to center on a line with slope determined by the coefficient of $1/s^2$ in the Laurent series for the transform, expanded about the origin.
No poles of order larger than $1$ arise in the analysis for $M(x)$ and $L(x)$, and the empirical plots for these functions (suitably normalized as in Figure~\ref{figMLH}) match our corresponding analytic estimates involving the zeros of the zeta function, like \eqref{lure} for P\'olya's function, very well.
(See for example \cite[Fig.\ 1]{Borwein}.)

\begin{figure}[tb]
\centering
\includegraphics[width=4in]{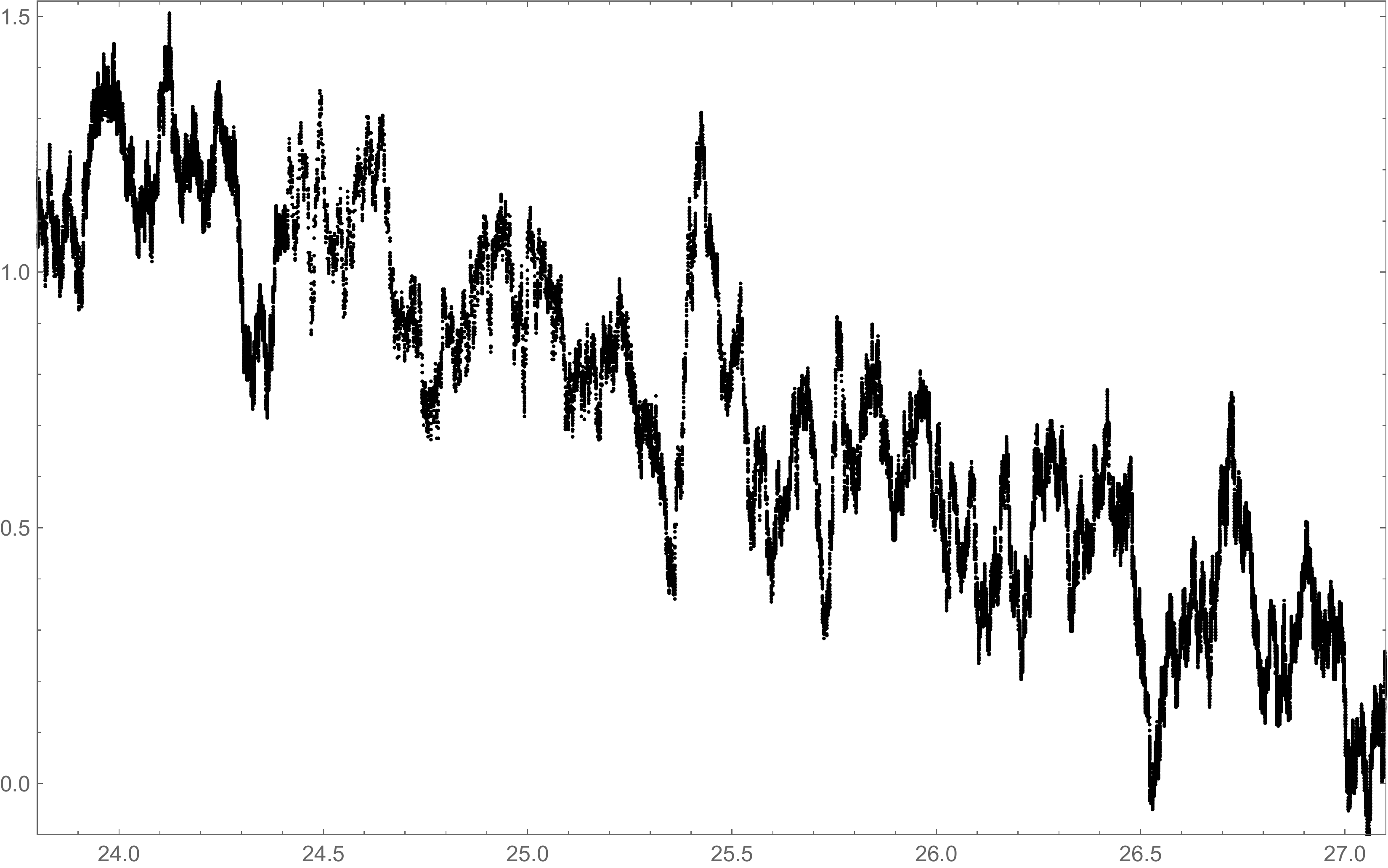}
\caption{Actual sampled values of $e^{-u/2}H(e^u)$.}\label{figHsamp}
\end{figure}

\begin{figure}[tb]
\centering
\includegraphics[width=4in]{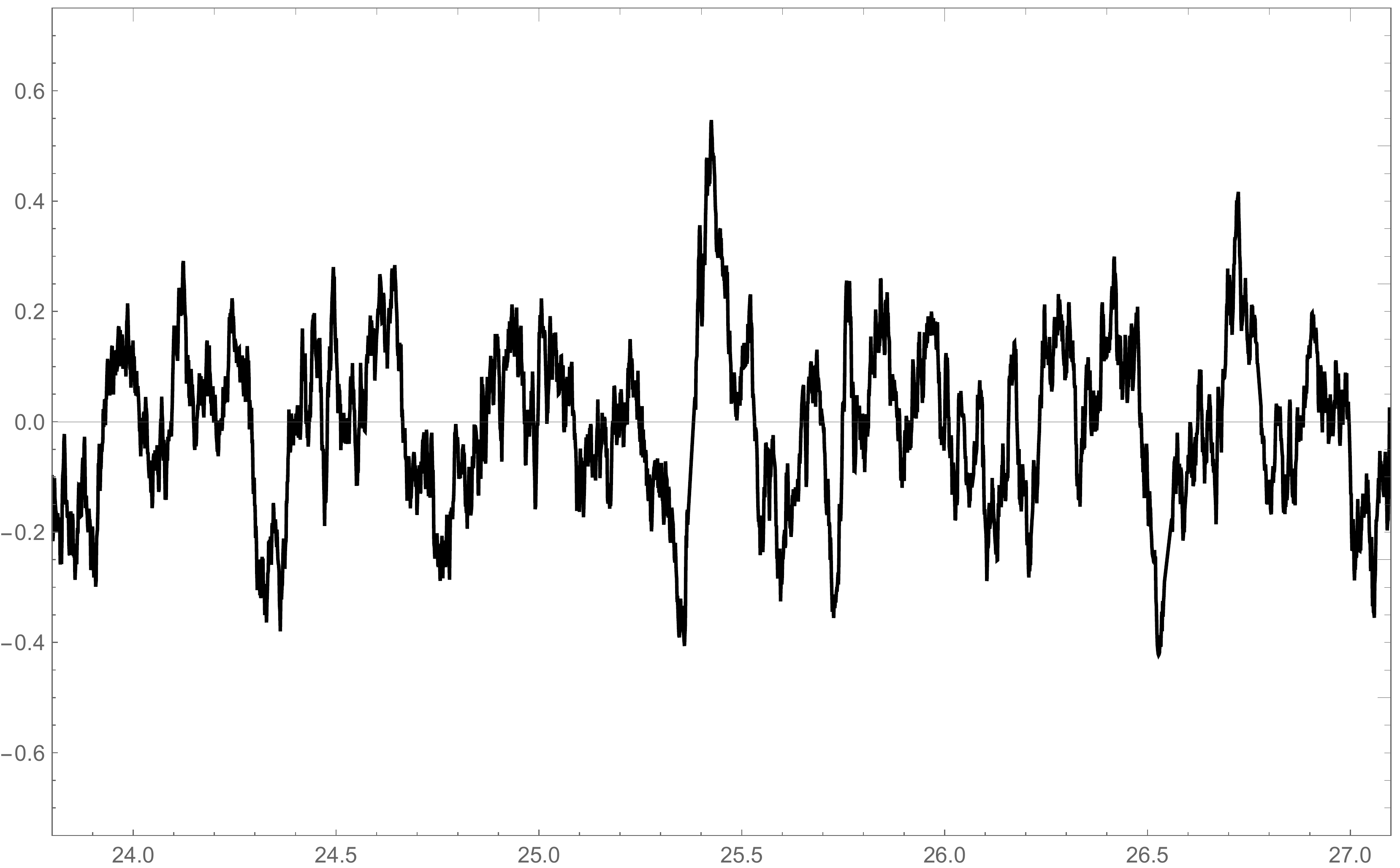}
\caption{Estimate of $e^{-u/2}H(e^u)$ using the sum in \eqref{trout} with $T=5000$.}\label{figHest}
\end{figure}

However, our Dirichlet series $h(s)$ has many poles of higher order.
Unlike the analysis for $L_{1/2}(x)$, these poles occur \textit{behind} the critical line.
From \eqref{turkey}, it is evident that each nontrivial zero of the zeta function $\rho$ gives rise to a pole of order $2$ at $\rho/3$, one of order $3$ at $\rho/4$, etc., and each of these zeros produces a contribution that perturbs the local oscillations.
These contributions will attenuate as $u$ grows: the terms from the poles of order $2$ on the $\sigma=1/4$ line inherit a  damping factor of $e^{-u/4}$, and those from the poles of order $3$ on the $\sigma=1/6$ line would be assessed with a factor of $e^{-u/3}$.
We surmise that the net effect of these contributions produces the global structure evident in the plots of $e^{-u/2}H(e^u)$ in Figures~\ref{figMLH} and~\ref{figBigH} for small values of $u$, but we leave a more precise modeling here to future research.

We expect then that the additional global structure for $e^{-u/2}H(e^u)$ that is evident in Figures~\ref{figMLH} and~\ref{figBigH} will dissipate as $u$ grows large, and eventually the oscillations will center on the $0$ line.
We remark also that residues from these higher-order poles, like \eqref{boise} for a pole of order $2$ on the $\sigma=1/4$ line, will also produce some contribution, but these are insignificant even in the range plotted in Figure~\ref{figHsamp}.

\section{Calculations}\label{secCalculations}

We describe two large calculations required for this work.
First, we provide the details of our establishment of weak independence of certain zeros of the Riemann zeta function, to furnish a proof of Theorem~\ref{thmOscillate}.
Second, we describe a sieving procedure employed to generate the values of $\xi(n)$ that were used to produce the plot of $e^{-u/2}H(e^u)$ in Figure~\ref{figBigH}.

\subsection{Establishing weak independence}\label{subsecWeakInd}
The weak independence method has been applied to both the Mertens function $M(x)$ and P\'olya's function $L(x)$.
For the former, Best and the second author \cite{Best} selected $n=500$, $m=2000$, and precision parameter $b=30000$, and so reduced $1501$ lattices of dimension $500$ or $501$.
They chose a modest value for the parameter $\delta$ in the LLL algorithm \cite{LLL}: this parameter governs the amount of work performed by the algorithm and affects the quality of the obtained basis.
This parameter needs to lie in the interval $(1/4,1)$, with smaller choices producing faster run times but worse guarantees on the quality of the basis.
With the parameters above and using $\delta=0.3$, the authors obtained that $N=4976$ suffices, and using their 500 best residues they proved \eqref{eqnOscMerBT}.

For P\'olya's function $L(x)$, the authors employed a different tack in \cite{MT2017} to obtain the bounds \eqref{eqnOscPolMT}, using a larger number of smaller lattices ($n=250$, $m=3700$), at a lower precision ($b=6600$ for most lattices, $b=6950$ for some) but a higher value of $\delta= 0.99$, in order to minimize computation time while obtaining the desired result that $L(x)>\sqrt{x}$ infinitely often.
The $N$ value obtained here was $3100$.
We mention briefly that these bounds improved some prior results: Anderson and Stark \cite{AndStark} had $0.02348$ as a lower bound on the upper oscillation by using data from \cite{Lehman}.
This was improved to $0.06186$ in \cite{Borwein}.
For the lower oscillation, Humphries \cite{HumphriesJNT} had noted that the method of \cite{AndStark} and the data from \cite{Borwein} produces $-1.3829$.

The weak independence result for $L(x)$ produces an oscillation bound in the $H(x)$ problem immediately.
By simply using the residues for $h(s)$ arising from the zeros of the zeta function which were optimal for the $L(x)$ problem, one obtains $H(x)>1.613\sqrt{x}$ and $H(x)<-1.613\sqrt{x}$ by using this existing weak independence result.
However, we can do better with a new calculation, since the zeros that were optimal for $L(x)$  are unlikely to be optimal also for $H(x)$.
For this problem, we begin by estimating the time required to obtain a particular oscillation bound $B$ in terms of our parameters.
Setting a goal of $N=3000$ or better, for each potential value of $n$ we can determine the smallest $m$ needed to obtain a value for $\sum_{\gamma\in\Gamma'}k_T(\gamma)\abs{r_\gamma}$ that is at least as large as $(N+1)B/N$, and we find empirically that the precision $b$ required grows linearly in $n$.
This allows us to obtain estimates for run times for each configuration, and we computed estimates for $B=1.7$, $B=1.75$, and $B=1.8$.
For $B=1.7$, we found that choosing $n=239$, $m=2365$, $b=6400$ is optimal, and using $\epsilon=10^{-10}$ and $\delta=0.99$ we performed these computations on the $m-n+1=2127$ generated matrices in a large, distributed computation.
We conclude that $N=3950$ suffices, and this allows us to confirm Theorem~\ref{thmOscillate}.
The indices of the $n=239$ best zeros of $\zeta(s)$ for this calculation (relative to $m=2365$) appear in Figure~\ref{figBestZerosH}.

\begin{figure}[tb]
\small
\begin{quote}
1--72, 74--76, 78--116, 118--145, 147--170, 172--178, 180, 182--188, 190--194,
196--198, 200--202, 204, 206, 207, 209, 212--214, 216, 217, 219, 222, 224, 225,
230, 232--234, 237, 238, 240, 242--245, 248--250, 253, 257, 263, 265, 268, 269,
275, 282, 283, 290, 298, 299, 301, 311, 314--316, 327, 340, 364.
\end{quote}
\caption{Indices of the $239$ zeros of $\zeta(s)$ employed in the calculation for $H(x)$.}\label{figBestZerosH}
\end{figure}

The lattice reduction and Gram--Schmidt calculations were performed on Raijin, a high-performance computer managed by NCI Australia.
Our $m-n+1=2127$ jobs ran on Intel Xeon Sandy Bridge and Broadwell $2.6$ GHz processors, and most required about 110 minutes to complete.
We estimate that establishing an oscillation bound of $B=1.75$ for $H(x)$ would require $2.4$ times the work that was needed for $B=1.7$, using $n=263$ and $m=2647$, and $B=1.8$ would require $5.9$ times the effort, with $n=291$ and $m=2872$.
The precision parameter $b$ would need to increase as well here, by approximately $50$ bits per added dimension over $n=239$.

It is entirely possible to use a kernel different from \eqref{eqnJPkernel}.
While we have not pursued this, we note that Odlyzko and te Riele explored this idea in \cite[pp.\ 150--151]{OTR}.
Best and the second author presented some computational data in \cite[Fig.~2]{Best} showing that even the invalid choice of $k_{T}(\gamma) =1$ produces only minimal improvements.

\subsection{Computing $H(x)$ over a large interval}\label{subsecSieve}

A sieving strategy was employed to compute $\xi(n)$ for $1\leq n\leq 1.5\cdot10^{15}$. These values were  used to create the plot of $H(x)/\sqrt{x}$ in Figure~\ref{figBigH}.
This method is similar to the one employed in \cite[Algs.\ 1 and 2]{Borwein} for computing the values of $\lambda(n)$ over a large interval.
It employs a large table of values of $\xi(n)$ in order to cut the amount of factoring that is required: if $p\mid n$ and $\xi(n/p)$ is stored in our table, then we can set $\xi(n)$ to either $\xi(n/p)$ or $-\xi(n/p)$ depending on whether or not $p \mid n/p$.
The first step then is constructing this table.
In order to optimize the utility of the table, we store $\xi(n)$ only for integers $n$ that are relatively prime to $30$, up to a particular bound $N-1$.
It is convenient to select $N$ to be a multiple of $30$.
Note just one bit is needed to store each $\xi(n)$ value, so a table of size $N$ requires $N/30$ bytes of storage.
The table is constructed by using a boot-strapping strategy: one begins with a table size of one byte representing $\xi(n)$ for $n\in\{1,7,11,13,17,19,23,29\}$, then employs the sieving procedure described below to double the table size repeatedly until reaching the desired final size $N$.

With this table in hand, we use the following procedure to compute $\xi(n)$ over all integers in a given interval $[a,b)$ with positive integers $a<b$.
First, we allocate a block of size $2(b-a)$ bits and fill it with zeros to indicate that $\xi(n)$ is presently unknown for each integer in this interval.
(Two bits are needed for each integer, to indicate if $\xi(n)=\pm1$, or if $\xi(n)$ is presently unknown.)
Next, for each prime $p$ with $b/(N-1)\leq p\leq\sqrt{b}$, we use a sieving strategy to visit each multiple of $p$ in $[a,b]$.
Each such integer $n$ has $n/p\leq N-1$, so $\xi(n/p)$ can be determined by using the table, after removing all factors of $2$, $3$, and $5$, and keeping track of how many of these small primes divide $n$.
After this, for each prime $p$ from $\ceiling{b/(N-1)}-1$ down to $7$, we perform the same sieving strategy, and at each integer $n$ divisible by $p$, we remove any factors of $2$, $3$, and $5$, and check if the resulting cofactor $n'$ is less than $N$.
If it is, we can determine $\xi(n)$ with the information at hand.
If it is not, then we must use trial division with primes starting at $q=7$ to search for any additional prime factors of $n'$, to reduce the cofactor still further.
We halt either when the remaining cofactor is less than $N$, or when our trial division procedure reaches the prime $p$, in which case this remaining cofactor is prime.
In all cases we may deduce the value of $\xi(n)$.
Next, we scan our list for integers in $[a,b]$ where $\xi(n)$ remains unknown---these all have the form $2^r3^s5^tp^e$ with $r$, $s$, $t$ all nonnegative integers, $p>5$ prime, and $e=0$ or $1$, and set $\xi(n)$ appropriately.
A final pass computes $\sum_{n=a}^{b-1} \xi(n)$, occasionally printing the value of the partial sum, to facilitate the construction of the plot in Figure~\ref{figBigH}.

This sieving procedure is distributed across many nodes in a cluster, each with its own copy of the table of values of $\xi(n)$ for small $n$.
This is organized so that each node handles a number of blocks in succession, on the order of $10^4$ blocks of size $10^7$ or $10^8$.
We employed values of $N$ ranging from approximately $N=4\cdot10^{10}$ (requiring about $1.24$ GB) to $N=1.5\cdot10^{12}$ ($46.6$ GB), as larger tables facilitated sieving intervals of larger integers.
Computations were performed on Raijin at NCI Australia, and on a cluster at Davidson College with Intel Xeon Sandy Bridge $3.0$ GHz processors.

\section{Open problems}\label{secOpen}

The true orders of growth of $M(x)$ and $L(x)$ have attracted substantial attention, but these problems remain open.
An unpublished conjecture of Gonek (see Ng \cite{Ng}) for $M(x)$ can be extended to cover $L(x)$ as well.
This conjecture states that there exists a positive number $A$ for which
\begin{equation*}\label{soup}
\liminf_{x\rightarrow\infty} \frac{L(x)}{x^{1/2} (\log\log\log x)^{5/4}} = -A, \quad \limsup_{x\rightarrow\infty} \frac{L(x)}{x^{1/2} (\log\log\log x)^{5/4}} = A,
\end{equation*}
and similarly for $M(x)$. 
Almost certainly the same result should be expected for $H(x)$.
These conjectures seem out of reach at present.

The explicit formula in (\ref{lure}) with its negative leading term, indicates a bias for sums of $\lambda(n)$, whereas (\ref{trout}) indicates no bias for sums of $(-1)^{\omega(n)}$. Humphries' work, mentioned at the start of Section~\ref{secQualAn}, may be complemented by work of Brent and van de Lune \cite{BrentL} who showed, that, in some sense, $\lambda(n)$ is negative on the average. More specifically, they showed that there is a positive constant $c$ for which, for every fixed $N\geq 1$ we have
\begin{equation}\label{rush}
\sum_{n=1}^{\infty} \frac{\lambda(n)}{e^{n\pi x} +1} = -\frac{c}{\sqrt{x}} + \frac{1}{2} + O(x^{N}),
\end{equation}
as $x\rightarrow 0^{+}$. It would be interesting to see whether a result similar to (\ref{rush}) holds with sums of $(-1)^{\omega(n)}$, or whether the negative leading term disappears.

Another question of wide interest originated with Chowla \cite{Chowla}, who conjectured that for each $k\geq1$, each of the $2^k$ sequences in $\{-1,1\}^k$ should occur with density $2^{-k}$ among all sequences of the form $\lambda(n)$, $\lambda(n+1)$, \ldots, $\lambda(n+k-1)$.
This is known to be true for $k=1$, as this is equivalent to the Prime Number Theorem, but it is not known for any $k>1$. 

For $k=2$, a problem posed by Graham and Hensley \cite{GH} and solved by them and by Propp in \cite{GHsolve} shows that the pattern $\lambda(n) = \lambda(n+1)$ occurs with density at least $1/4$. This was improved by Tao \cite{TaoSolo1} who showed that the patterns $\lambda(n) = \lambda(n+1)=1$ and $\lambda(n) = \lambda(n+1)=-1$ each occur with logarithmic density $1/4$.

A partial result on Chowla's conjecture for $k\leq 3$ was obtained by Hildebrand \cite{Hildebrand} by elementary means.
Hildebrand proved that each of the possible sign patterns of size $k=3$ occur infinitely often.
Trying to show that each possible triple of sign patterns occurs with density precisely $1/8$ seems very difficult.
However, significant progress has recently been made by Matom\"{a}ki, Radziwi\l\l, and Tao \cite{MRT}, who proved that each sign pattern occurs with positive density. 
They note that their method may be able to give an effective estimate for this density.
Can this be extended to the other sign patterns?
Can this technique, or Hildebrand's result, be applied to the sequence $\xi(n) = (-1)^{\omega(n)}$?

Finally, we note a recent paper by Tao and Ter\"av\"ainen \cite{Tao2}, who prove two results pertinent to this discussion.
First, they establish that the triple 
\[
\omega(n), \; \omega(n+1), \; \omega(n+2) \pmod 3
\]
assumes all $27$ possible combinations with positive density.
They also prove that at least $24$ of the $32$ possible sign patterns of quintuples of $\lambda(n) = (-1)^{\Omega(n)}$ occur with positive density.
This improves on their results in \cite{TT2}, where they also showed that each sign pattern of length 3 occurs with logarithmic density $1/8$.

\section*{Acknowledgments}

We thank NCI Australia, UNSW Canberra, and Davidson College for computational resources.
This research was undertaken with the assistance of resources and services from the National Computational Infrastructure (NCI), which is supported by the Australian Government.

We are grateful to the anonymous referees for very useful feedback and wish to thank the following colleagues for fruitful discussions: Peter Humphries, Kaisa Matom\"{a}ki, Hugh Montgomery, Nathan Ng, Maksym Radziwi\l\l, Terry Tao, and Joni Ter\"{a}v\"{a}inen.


\bibliographystyle{amsplain}

\end{document}